\newtheorem{theorem}{Theorem}[section]
\newtheorem{lemma}[theorem]{Lemma}
\newtheorem{cor}[theorem]{Corollary}
\newtheorem{prop}[theorem]{Proposition}
\newtheorem*{conjecture}{Conjecture}
\theoremstyle{definition}
\theoremstyle{remark}
\numberwithin{equation}{section}
\newcommand{\sumi}{\sum\limits_{i=1}^{n}}
\newcommand{\ceil}[1]{\left \lceil #1 \right \rceil }
\newcommand{\R}{\mathbb{R}}
\newcommand{\ignore}[1]{}
\newcommand{\mylabel}[5]{\put(#1,#2){#5}\put(#3,#4){#5}}
\newcommand{\mysetb}[7]{\color{#7}\put(#1,#2){\line(1,0){#3}}\put(#4,#5){\line(1,0){#6}}\color{black}}
\definecolor{darkred}{rgb}{0.8,0,0}
\definecolor{darkblue}{rgb}{0,0,0.8}
\definecolor{darkyellow}{rgb}{0.9,0.84,0}
\definecolor{darkgreen}{rgb}{0,0.8,0}
\begin{document}

\title
{Double-Interval Societies}

\author[M. Klawe] {Maria Klawe}%
\address{President, Harvey Mudd College}%
\email{klawe@hmc.edu}%

\author[K.L. Nyman] {Kathryn L. Nyman}%
\address{Department of Mathematics, Willamette University}%
\email{knyman@willamette.edu}%

\author[J.N. Scott] {Jacob N. Scott}%
\address{Department of Mathematics, University of California, Berkeley}% 
\email{jnscott@math.berkeley.edu}%

\author[F.E. Su]{Francis Edward Su*}%
\address{Department of Mathematics, Harvey Mudd College}%
\email{su@math.hmc.edu}%
\thanks{*Work partially supported by NSF Grant DMS-1002938}

\subjclass[2000]{Primary 52A35; Secondary 91B12}

\date{}

\begin{abstract}
Consider a society of voters, each of whom specify an \emph{approval set} over a linear political spectrum.  
We examine \emph{double-interval societies}, in which each person's approval set is represented by two disjoint closed intervals, and study this situation where the approval sets are \emph{pairwise-intersecting}: every pair of voters has a point in the intersection of their approval sets.  The \emph{approval ratio} for a society is, loosely speaking, the popularity of the most popular position on the spectrum.  We study the question: what is the minimal guaranteed approval ratio for such a society?  We provide a lower bound for the approval ratio, and examine a family of societies that have rather low approval ratios.  These societies arise from \emph{double-$n$ strings}: arrangements of $n$ symbols in which each symbol appears exactly twice. 
\end{abstract}

\maketitle
\section{Introduction}
Consider the voting model of Berg et.\,al.\cite{NSTW}\ in which a political spectrum $X$ is viewed as a continuum, with liberal positions on the left and conservative positions on the right, and in which each voter $v$ ``approves'' an interval of positions along this line.  For example, a tolerant moderate might approve a wide interval near the middle of the line, while an intolerant partisan may approve a narrower interval near one of the ends.

More formally, a \emph{society} is a spectrum $X$ together with a set of voters $V$ and a 
collection of approval sets $\{ A_{v} \}$, one for each voter.  A point on the spectrum $X$ is called a \emph{platform}.
In our situation, we imagine $X$ to be $\R$, and each approval set $A_{v}$ is a closed interval that represents the set of all platforms that $v$ approves.

Now suppose that every pair of people can agree on some platform; that is, their intervals overlap.  In this situation, Helly's Theorem \cite{helly} implies that there exists a point on the line that lies in everyone's approval set, i.e., there is a platform that everyone approves.  Thus a strong hypothesis (pairwise intersecting sets) produces a strong conclusion (a point in all the sets).  However, in voting theory, we are usually not looking for unanimity, but may be satisfied with a platform that has high \emph{approval ratio}: the fraction of voters that approve this platform.

Various authors have relaxed the hypotheses.  Berg et.\,al.\cite{NSTW}\ define a linear $(k,m)$-agreeable society in which voter preferences again are modeled by closed intervals in $\R^1$.  In this society, given any set of $m$ voters, there exists a subset of $k$ voters whose approval intervals mutually intersect.   They prove that there must exist some platform with approval ratio $\frac{k-1}{m-1}$.
Another generalization by Hardin \cite{hardin} looks at approval intervals on a circle rather than a line, and finds that with $(k,m)$-agreeability, the approval ratio of the society is at least $\frac{k-1}{m}$. 
  
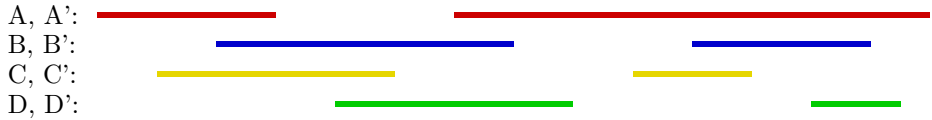
\begin{figure}[h]
\centering
\setlength{\unitlength}{22.5pt}
\begin{picture}(15,2)

\linethickness{2pt}
\color{darkred}
\put(1,1.8){\line(1,0){3}}
\put(7,1.8){\line(1,0){8}}
\color{darkblue}
\put(3,1.3){\line(1,0){5}}
\put(11,1.3){\line(1,0){3}}
\color{darkyellow}
\put(2,.8){\line(1,0){4}}
\put(10,.8){\line(1,0){2}}
\color{darkgreen}
\put(5,0.3){\line(1,0){4}}
\put(13,0.3){\line(1,0){1.5}}

\color{black}
\put(-0.5,1.65){A, A':}
\put(-0.5,1.15){B, B':}
\put(-0.5,.65){C, C':}
\put(-0.5,0.15){D, D':}

\put(2,-1.5){}
\end{picture}
\caption{A pairwise-intersecting society of size 4 with approval number 3.}
\label{examplesociety}
\end{figure}

We generalize the one-interval model to a society in which every member is identified with two disjoint approval intervals and call such a society a {\it double-interval society}.  This situation may arise naturally in the context of voting to account for voters who do not place candidates along a linear spectrum in exactly the same order, or to account for voters who find disjoint sets of platforms appealing for entirely different reasons (e.g., for being a party purist, or having the ability to work across party lines).   In a scheduling context, such intervals might model a society of workers, each of whom has two different work shifts.

Figure \ref{examplesociety} illustrates a double-interval society with four voters.  The approval sets of each voter have been separated vertically so they are easier to see.  Note that the approval sets are pairwise-intersecting: each voter overlaps every other voter in one or both of their approval intervals.  In this example, there are several platforms approved by three voters, but no platform is approved by all four.  The {\it approval number} of a platform $a(p)$ is the number of voters (in a society $S$) who approve of platform $p$.  The {\it approval number} of a society $a(S)$ is the maximum approval number over all platforms in the spectrum $X$.  That is
$$ a(S) = \max_{p \in X} a(p).$$
Finally, define the {\it approval ratio} of a society
to be the approval number of $S$ divided by the number of voters in $S$.

The main question we address in this paper is: what is the minimal approval ratio of a pairwise-intersecting, double-interval society with $n$ voters?

Examples suggest that the minimal approval ratio of such societies is $1/3$; that is, there is always a platform that will get at least a third of the votes.  Our results in this paper attempt to clarify this intuition.

We will first examine a family of double-interval societies with low approval ratios that have regular patterns of interval overlap.
These arise from the construction of what we call \emph{double-$n$} strings, defined in the next Section.
The combinatorics of such strings are quite nifty and provide a lower bound for the approval ratio of societies in this family 
(Theorem \ref{klowerbound}) as well as an upper bound (Theorem \ref{klaweupperbound}) for societies in this family. 
Roughly speaking, the double-$n$ strings produce societies with asymptotic approval ratios between $0.348$ and $0.385$.  

We will also prove a general lower bound for the approval ratio of any pairwise-intersecting, double-interval society in Theorem \ref{lowerbound}, which shows the approval ratio is always greater than $0.268$.

Then we ask if we can find specific societies with lower approval ratios than the ones arising from double-$n$ strings, and discover that there are such examples.  We find them by modifying the construction that comes from double-$n$ strings.  See Table \ref{boundresults}. However, all of these examples have approval ratio greater than or equal to 1/3.

\section{Double-$n$ String Societies}

Double-interval societies with regular patterns of interval overlap can be represented by \emph{double-$n$ strings}, that is, strings of length $2n$ containing exactly two occurrences of each of $n$ symbols.  At times we will also represent double-$n$ strings as strings of the symbols $1, \ldots, n$.  
We define the \emph{distance} between two distinct symbols in a double-$n$ string to be the minimum distance between a pair of occurrences of the symbols, where the distance between two adjacent symbols is taken as 1. The \emph{diameter} of a double-$n$ string is the maximum over all $1\leq i<j\leq n$ of the distance between $i$ and $j$.   We will call two entries in the list \emph{adjacent} if their positions in the list differ by no more than the diameter of the string.  

Let $\delta(n)$ be the minimum diameter over all double-$n$ strings.

\begin{figure}[h]
\centering
\setlength{\unitlength}{12pt}
\begin{picture}(19,12)
\linethickness{2pt}

\mysetb{0}{10.8}{3}{11.2}{1.2}{3.1}{darkred}
\mysetb{1.4}{9.6}{3}{7}{4.8}{3.1}{darkblue}
\mysetb{2.8}{8.4}{3}{9.8}{2.4}{3.1}{darkyellow}
\mysetb{4.2}{7.2}{3}{12.6}{0.0}{3.1}{darkgreen}
\mysetb{5.6}{6.0}{3 } {8.4} {3.6} {3.1 } {orange}

\mylabel{1.4}{11}{12.6}{1.4}{A}
\mylabel{2.8}{9.8}{8.4}{5}{B}
\mylabel{4.2}{8.6}{11.2}{2.6}{C}
\mylabel{5.6}{7.4}{14}{0.2}{D}
\mylabel{7} {6.2 } {9.8} {3.8} {E}

\end{picture}
\caption{A society represented by the double-$5$ string $ABCDEBECAD$.}
\label{double-n}
\end{figure}
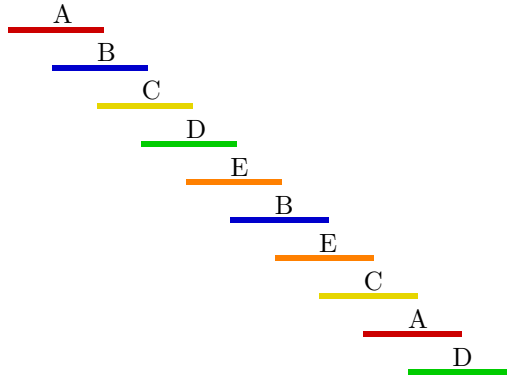

We can construct a pairwise-intersecting double-interval society from a double-$n$ string with diameter $d$ by assigning intervals of equal width to the symbols, long enough so that each interval overlaps the intervals of the $d$ symbols to its right and left.

For example, consider the double-5 string $ABCDEBECAD$. This string has diameter 2, since any pair of symbols $A$ through $E$ appear somewhere in this list separated by at most one other symbol (e.g., the second occurrences of $A$ and $E$ in this string are distance 2 apart). 
We build a society from this string by assigning intervals of equal width as in Figure \ref{double-n}.  
This society has approval number 3 as can be seen 
since the right endpoint of $A$'s first interval intersects the left endpoint of $C$'s first interval, and both intersect $B$'s first interval. Hence we see that $\delta(5) \leq 2$ (and in fact $\delta(5)=2$).  Note that in general the approval number of a 
society with an underlying double-$n$ string is one more than the diameter, that is, $a(S) = d+1$.  

\section{Asymptotic approval ratios for double-$n$ string societies}

If $S$ is arises from a double-$n$ string with diameter $d$, then since $a(S)=d+1$, we see that the minimal approval ratio of such a society is 
$(\delta(n)+1)/n$.  By taking limits, we see that 
$$
\Delta = \lim_{n \rightarrow \infty} \frac{\delta(n)+1}{n} =  \lim_{n \rightarrow \infty} \frac{\delta(n)}{n}
$$
is the asymptotic approval ratio for societies arising from double-$n$ strings.  In this section, we will show that 
$$8/23 \leq \Delta \leq   5/13.$$
It is clear that for $n > 1$ we have $\delta(n-1) \leq \delta(n)$ since for any double-$n$ string we can form a double-$(n-1)$ string of no larger diameter by deleting both occurrences of the $n$-th symbol. Given a double-$n$ string $S$ we label the symbols as $1,2, \ldots, n$ according to the left to right order of their first occurrence within $S$. 

It is easy to see that $\Delta \leq 1/2$ since the double-$n$ string $1,2,\ldots , n, 1, 2, \ldots, n$ shows that $\delta(n) \leq n/2$ for $n$ at least 2.  In fact, we can show $\delta(n) < n/2$ for $n$ at least 3. Although this does not change the upper bound on $\Delta$, we will need this result for the lower bound $8/23 \leq \Delta$. 

\begin{lemma} If $n>2$, then $\delta(n) < n/2$. 
\label{lem:n/2}
\end{lemma}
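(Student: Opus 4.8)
The goal is to show that for $n > 2$ we can always do strictly better than the trivial string $1,2,\ldots,n,1,2,\ldots,n$, whose diameter is exactly $\lfloor n/2 \rfloor$. The plan is to exhibit, for each $n > 2$, an explicit double-$n$ string whose diameter is strictly less than $n/2$, and then verify the diameter bound by a direct (if slightly tedious) case analysis on the positions of the symbols. Since the claim is a strict inequality $\delta(n) < n/2$ rather than an asymptotic statement, a single well-chosen family of constructions — likely split into a few residue classes of $n$ modulo a small number — should suffice.

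First I would handle small cases ($n = 3, 4, 5$) by hand: for instance the double-$5$ string $ABCDEBECAD$ already exhibited in the text has diameter $2 < 5/2$, and similar short strings settle $n=3,4$. For the general case, I would take the trivial interleaving $1,2,\ldots,n,1,2,\ldots,n$ and perturb it: the problem with that string is exactly the pair $\{1,n\}$ (and nearby pairs), whose two copies sit at distance $n$ and $n$, giving distance $n$... wait, more precisely the worst pair in $1\cdots n 1 \cdots n$ is $\{1,n\}$ at distance $\min(n-1, n-1, \ldots) $; the point is the diameter is about $n/2$ coming from symbols whose two copies straddle the midpoint badly. The fix is to cyclically shift the second block, i.e. use a string of the form $1,2,\ldots,n,\,k{+}1,k{+}2,\ldots,n,1,2,\ldots,k$ for a suitable offset $k \approx n/2$, or to reverse the second copy. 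I would compute the diameter of such a shifted/reversed string: every symbol $i$ then has its two occurrences at two controlled positions, and the distance between symbols $i$ and $j$ becomes a small function of $|i-j|$ and the offset, maximized at something like $\lceil n/2 \rceil - 1$ or $\lfloor (n-1)/2 \rfloor$, which is strictly below $n/2$ once $n \geq 3$.

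The main obstacle is bookkeeping: pinning down the offset $k$ (and possibly a parity split) so that the maximum over all pairs $\{i,j\}$ of the minimum pairwise distance is provably $< n/2$, and making sure no pair of symbols ends up with both of its inter-copy distances large. Concretely I expect to need to check pairs of three types — both copies within the first block segment, both within the second, and one in each — and to observe that for each pair at least one of the four candidate distances is at most roughly $n/2 - 1$. Once the extremal pair is identified and its distance bounded by an explicit quantity strictly less than $n/2$, the lemma follows; I would also remark that this construction only establishes $\delta(n) < n/2$ and does not yet pin down $\delta(n)$, which is consistent with the paper's stated use of the lemma merely as an input to the asymptotic lower bound $8/23 \leq \Delta$.
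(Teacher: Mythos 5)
Your cyclic-shift construction does not achieve the strict bound when $n$ is even. Write $k$ for the offset, so the string has the shape $S_1\, S_2\, S_2\, S_1$ with $S_1=(1,\ldots,k)$ and $S_2=(k{+}1,\ldots,n)$. For a symbol at position $p$ (from the left) in $S_1$ and one at position $q$ in $S_2$, the only two candidate distances that can be small come from the adjacent occurrences $S_1 S_2$ and $S_2 S_1$, namely $(|S_1|-p)+q$ and $(|S_2|-q)+p$; these sum to exactly $n$, so their minimum is at most $\lfloor n/2\rfloor$ but can equal $n/2$ whenever $n$ is even. Concretely for $n=6$, every offset $k$ gives diameter exactly $3=n/2$: with $k=3$ the string is $1,2,3,4,5,6,4,5,6,1,2,3$, and the pair $\{1,4\}$ (positions $1,10$ and $4,7$) has minimum distance $3$. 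Reversing the second copy is even worse, since $\{1,n\}$ then sits at distance $n-1$. So your estimate $\lceil n/2\rceil - 1$ for the diameter is wrong for even $n\geq 6$, and no parity split or choice of offset within the two-block family fixes it.

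The missing idea is to go beyond a two-block shuffle. The paper partitions $1,\ldots,n$ into \emph{four} nearly equal consecutive blocks $S_1,S_2,S_3,S_4$ of length $r=\lfloor n/4\rfloor$ or $r+1$ and takes $T(n)=S_1 S_2 S_3 S_4 S_1 S_3 S_2 S_4$, the blow-up of the diameter-$1$ double-$4$ string $1,2,3,4,1,3,2,4$. Since every unordered pair of blocks occurs adjacently somewhere in $T(n)$, any two symbols are at distance roughly $2r-1<n/2$, and a short case check on $n\bmod 4$ confirms the strict inequality in each residue class. That four-block interleaving, not a shifted two-block string, is what makes the strict bound go through.
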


\begin{proof}
The case $\delta(3) = 1$ follows from the double-3 string 1,2,3,1,2,3 and the case $\delta(4) = 1$ follows from the double-4 string 1,2,3,4,1,3,2,4. The proof for the general case $n>4$ is based on this double-4 string. Let $r = \lfloor n/4 \rfloor$.
 We will partition 1,2,\ldots,$n$ into four strings $S_1$, $S_2$, $S_3$, $S_4$ where each is of length $ r$ or $r+1$ depending on the value of  $n \mathbin{\mathrm{mod}} 4$. We prove the result by looking at the diameter of the double-$n$ string $T(n) = S_1, S_2, S_3, S_4, S_1, S_3, S_2, S_4$. 

Suppose $n = 4r$ for some positive integer $r$.  We let $S_i$ be the string $(i-1)r+1, (i-1)r+2,\ldots, ir $ of length $r$  and it is easy to check that the diameter of $T(n)$ is $2r - 1$ and we have $2r -1  <  n/2$. For $n = 4r+1$, let $S_1$, $S_2$, $S_3$ be as before and let $S_4$ be the string $3r+1$,\ldots,$4r+1$ of length $r+1$. Now the diameter of $T(n)$ is $2r$, and again we have $2r < n/2$ as desired. For $n = 4r+2$ we set $S_1$ and $S_4$ to have length $r$ and $S_2$ and $S_3$ to have length $r+1$. Consider $b$ in $S_i$ and $c$ in $S_j$ with $i < j$. It is easy to see that unless $i = 2$ and $j = 3$, the distance between $b$ and $c$ in $T(n)$ is at most $2r$ since at least one of $S_i$ and $S_j$ has length $r$ and the other has length at most $r+1$. Moreover, for $b$ in $S_2$ and $c$ in $S_3$ the distance between $b$ and $c$ in $T(n)$ is at most $r+1$ since both of the substrings  $S_2$, $S_3$ and $S_3$, $S_2$ occur in $T(n)$. Thus the diameter of $T(n) = 2r$ and we have $2r < 2r+1 = n/2$. Finally for $n = 4r+3$ we set $S_1$ to have length $r$ and $S_2$, $S_3$, $S_4$ to have length $r+1$. In this case it is easy to see that the diameter of $T(n)$ is $2r+1$ and we have $2r+1 < 2r+3/2 = n/2$.
\end{proof}

For simplicity, without loss of generality assume that if the first occurrence of symbol $m$ occurs at position $i$ in a double-$n$ string, then all symbols at positions $1 \leq j < i$ are less than $m$ (otherwise this condition can be satisfied by a permutation of the symbols in the double-$n$ string).  From Lemma \ref{lem:n/2}, it is sufficient to consider double-$n$ strings that have diameter less than $n/2$. It is also easy to obtain the lower bound $\Delta \geq 1/3$ as shown in the following lemma.

\begin{lemma}
Let $r$ be a positive integer.  We have
$\delta(3r+1) \geq r$.
\label{lem:3r+1}
\end{lemma}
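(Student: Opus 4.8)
The plan is to argue by contradiction: suppose some double-$(3r+1)$ string $S$ has diameter $d \le r-1$, so that every pair of symbols occurs somewhere within distance $r-1$. The key structural idea is to look at the \emph{first occurrences} of the symbols. With the normalization already in place (the first occurrence of symbol $m$ is preceded only by symbols smaller than $m$), the first occurrences of $1, 2, \ldots, 3r+1$ appear in $S$ in that order, though possibly interleaved with second occurrences of earlier symbols. I would track, for each $k$, the position $f_k$ of the first occurrence of symbol $k$, and exploit the fact that symbol $3r+1$ (whose first occurrence is at some position $f_{3r+1}$) must lie within distance $r-1$ of \emph{every} other symbol, in particular of symbols $1$ through $r$ whose first occurrences come very early.

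The main steps would be: (1) Since $S$ has length $2(3r+1) = 6r+2$ and the first occurrence of symbol $k$ has at least $k-1$ distinct symbols' first occurrences before it, show $f_k \ge k$; in particular $f_{3r+1} \ge 3r+1$, so the first occurrence of the last symbol sits in the second half of the string. (2) For symbol $3r+1$ to be within distance $r-1$ of each of symbols $1, \ldots, 2r+2$ (say), each such symbol must have an occurrence in the window of radius $r-1$ around one of the two positions of $3r+1$; this window has at most $2r-1$ positions other than those two, but we need to fit occurrences of $2r+2$ distinct symbols, forcing these occurrences to be tightly packed and forcing the two copies of $3r+1$ to be far apart. (3) Play the same counting game simultaneously for a small symbol (say symbol $1$, whose two occurrences must each be near half the other symbols) and derive a contradiction from the fact that the string only has $6r+2$ positions — roughly, the "near symbol $1$" region and the "near symbol $3r+1$" region must each contain $\Omega(r)$ of the symbols, they are essentially disjoint, yet together they would account for more than $3r+1$ symbols once one carefully accounts for which symbols can be shared.

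I expect the main obstacle to be making the overlap/counting argument in step (3) genuinely tight enough to hit the bound $\delta(3r+1) \ge r$ rather than a weaker bound like $\delta(3r+1) \ge r - O(1)$. The crude pigeonhole "a radius-$(r-1)$ window holds $\le 2r-1$ symbols" is not by itself enough; I would need to also use that a symbol appearing twice occupies two of those slots only if both its copies fall in the window, and to carefully combine the constraints coming from symbol $1$, symbol $3r+1$, and perhaps a middle symbol, partitioning the $6r+2$ positions into regions and summing the number of distinct symbols each region can serve. An alternative, possibly cleaner route is an explicit averaging argument: sum over all ordered pairs $(i,j)$ the indicator that $i$ and $j$ are within distance $d$, bound this sum above using that each position contributes to boundedly many close pairs, and below using that all $\binom{3r+1}{2}$ pairs must be counted; solving the resulting inequality for $d$ should yield $d \ge r$. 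I would try the averaging approach first, since it sidesteps the delicate case analysis, and fall back on the region-partition argument if the averaging constant comes out too weak.
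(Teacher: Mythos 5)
There is a genuine gap: your sketch circles around the key observation but never lands on it, and neither of your two proposed fallback routes would close the argument. The paper's proof is a three-line pigeonhole argument that hinges on an asymmetry you overlook. By the normalization already in place, symbol $1$ sits at position $1$ of the string, so its \emph{first} occurrence has neighbors on only one side: at most $d$ other entries lie within distance $d$ of it. Its second occurrence is near at most $2d$ entries. Since $1$ must be adjacent to all $n-1 = 3r$ other symbols, $d + 2d \geq 3r$, hence $d \geq r$. Your step (3) describes symbol $1$ as having ``two occurrences [that] must each be near half the other symbols,'' which treats both occurrences symmetrically and so at best gives $4d \geq 3r$, i.e.\ $d \geq 3r/4$; the one-sidedness of the first occurrence is exactly what upgrades this to $d \geq r$.

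Your alternative averaging route is also too weak to reach $r$. Counting unordered pairs of positions within distance $d$ in a string of length $2n$, with boundary corrections, gives at most $(4nd - d^2 - d)/2$ such pairs, which must cover all $\binom{n}{2}$ pairs of symbols. Plugging in $n = 3r+1$, $d = r-1$, the left side is $\tfrac12(11r^2 - 7r - 4)$ and the right side is $\tfrac12(9r^2 + 3r)$; for $r \geq 6$ the inequality is satisfied, so no contradiction arises and the argument cannot rule out $d = r - 1$. (Concretely, at $r = 6$, $n = 19$, $d = 5$ it allows $175 \geq 171$.) The averaging washes out the boundary effect that makes symbol $1$ special, which is why it lands near the weaker bound $d \gtrsim (n-1)/4$ rather than $(n-1)/3$. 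Your step (2) also contains an arithmetic slip: two radius-$(r-1)$ windows around the two occurrences of $3r+1$ hold up to $4r-4$ entries besides the centers, not $2r-1$, so no packing contradiction arises from symbol $3r+1$ alone. In short, focus on symbol $1$ alone and use that its first occurrence is at the left boundary; the lemma then falls out immediately.
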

\begin{proof}
Let $n=3r+1$. In any double-$n$ string of diameter $d$, the first occurrence of the symbol 1 can be adjacent to at most $d$ other symbols while the second occurrence can be adjacent to at most $2d$.  Because 1 must be adjacent to all $n-1$ other symbols, $d+2d \geq n-1=3r$, and so $d \geq r$.\end{proof}

\begin{lemma}
\label{distinctness}
In a double-$n$ string with diameter $d$, the first $n-d$ symbols are distinct (and hence in the order $1,2,\dots,n-d$).
\end{lemma}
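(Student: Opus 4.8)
The plan is to argue by contradiction using the convention (stated just before the lemma) that symbol first-occurrences appear in increasing order: the first time a new symbol appears reading left to right, it is the smallest symbol not yet seen. Under this convention, symbol $1$ has its first occurrence in position $1$, symbol $2$ has its first occurrence somewhere after the first occurrence of $1$, and so on. Suppose the claim fails, so among the first $n-d$ positions of the string some symbol is repeated, i.e., fewer than $n-d$ distinct symbols occupy positions $1,\dots,n-d$. Equivalently, some symbol $m$ has its first occurrence at a position $i > n-d$; pick the smallest such $m$. By the ordering convention all of $1,2,\dots,m-1$ have first occurrences before position $i$, and since $m$ is smallest with first-occurrence $> n-d$, in fact the first occurrences of $1,\dots,m-1$ all lie in positions $\le n-d$, while the first $i-1 \ge n-d$ positions already contain first occurrences of symbols $1,\dots,m-1$ only (any symbol appearing in positions $1,\dots,i-1$ has first occurrence $< i$, hence by minimality of $m$ has first occurrence $\le n-d$, hence is one of $1,\dots,m-1$). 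That forces $m-1 \ge i-1$, i.e. $m \ge i$.

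Now count distances from symbol $m$. Both occurrences of $m$ lie in positions $\ge i$, so the leftmost occurrence of $m$ is at position $\ge i \ge n-d+1$. The key point is that $m$ must be adjacent (in the diameter sense: within distance $d$ in position) to every other symbol, in particular to symbol $1$. But the only occurrence of symbol $1$ that can be within distance $d$ of an occurrence of $m$ would have to sit at position $\ge i - d \ge n-d+1-d = n-2d+1$. Meanwhile, I also know $m \ge i \ge n-d+1$, so $m$ is a "large" symbol: counting how many distinct symbols can possibly fit to the left of position $i$, combined with the fact that $1$'s two occurrences must both be reachable — here I would instead directly tally, as in Lemma~\ref{lem:3r+1}, the number of symbols that can be adjacent to the two occurrences of symbol $m$. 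Each occurrence of $m$ is adjacent to at most $d$ symbols on its left and at most $d$ on its right, but since the left occurrence of $m$ starts at position $\ge n-d+1$, there are at most $d-1$ positions to its left inside the string containing any symbol...

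Rather than push the position bookkeeping, the cleanest route is: since the first occurrences of $1,\dots,m-1$ all precede position $i$ and $m \le n$, and the leftmost occurrence of $m$ is at position $\ge i$, the set of positions strictly less than the leftmost occurrence of $m$ has size $\ge i-1 \ge n-d$, and all symbols there are among $\{1,\dots,m-1\}$ with no room to spare — so in particular symbol $1$ appears only within the first $i-1$ positions unless its second occurrence comes later. For $m$ to be adjacent to $1$, some occurrence of $1$ must be within $d$ of some occurrence of $m$; the leftmost $m$ is at position $p \ge i \ge n-d+1$, so we need an occurrence of $1$ at position $\ge p-d \ge n-2d+1$. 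Since $d < n/2$ we may assume $n-2d+1 \ge 2$, so this cannot be $1$'s first occurrence (which is at position $1$); it must be $1$'s second occurrence, forcing $1$'s second occurrence to lie in $[\,p-d,\,p+d\,]$. Repeating this argument for symbols $2,3,\dots$ in place of $1$: for each symbol $s < m$, one of its occurrences must lie within $d$ of an occurrence of $m$, and as $s$'s first occurrence is at a small position (it is $\le$ the number of symbols $< s$, which is small), the reachable occurrence is its \emph{second} one, which must therefore lie in the length-$(2d+1)$ window around $m$'s occurrences. But there are $m-1 \ge i-1 \ge n-d$ such symbols $s$, and the union of the two windows of radius $d$ around $m$'s two occurrences has at most $2(2d+1)$ positions, and subtracting the positions occupied by $m$ itself and by first-occurrences we get a contradiction with $n-d$ when $d<n/2$.

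The main obstacle is making the window-counting airtight: I must be careful that "adjacent to $m$" really does force the \emph{second} occurrence of each small symbol $s$ into the bounded window around $m$ (this uses that $s$'s first occurrence is too far left, which in turn needs a clean bound on where first occurrences sit — exactly the increasing-order convention), and I must correctly account for overlaps between the two radius-$d$ windows and for positions consumed by $m$'s own two occurrences and by the first occurrences $1,\dots,m-1$ that we already placed to the left. Once the counting inequality $m-1 \le (\text{size of available window})$ is set up, it contradicts $m-1 \ge n-d$ together with $d < n/2$ (guaranteed by Lemma~\ref{lem:n/2}), completing the proof; the parenthetical "and hence in the order $1,\dots,n-d$" is then immediate from the ordering convention, since $n-d$ distinct symbols among positions $1,\dots,n-d$ whose first occurrences are increasing must be exactly $1,2,\dots,n-d$ in that order.
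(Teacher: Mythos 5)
There is a genuine gap, and in fact an outright error early in your argument. From ``positions $1,\dots,i-1$ contain only symbols from $\{1,\dots,m-1\}$'' you conclude $m-1 \geq i-1$, i.e.\ $m \geq i$. But each symbol may appear \emph{twice} in those positions, so all you can infer is $i-1 \leq 2(m-1)$; and your standing assumption (some symbol repeats in the first $n-d \leq i-1$ positions) guarantees at least one of $1,\dots,m-1$ really does appear twice there, so $m \geq i$ genuinely fails in the cases you care about. The subsequent window-counting also does not close as sketched: the two radius-$d$ windows around $m$'s occurrences cover at most $4d+2$ positions, and needing to place roughly $n-d$ second occurrences there only yields $n - d \lesssim 4d+2$, i.e.\ $n \lesssim 5d+2$, which is perfectly compatible with $d < n/2$ — so no contradiction results even if the bookkeeping were made airtight. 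On top of this you import the hypothesis $d < n/2$, which is not part of the lemma's statement; the lemma is true for every diameter $d \geq 1$.

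The paper's proof is a three-line argument in the same general spirit but makes a sharper choice of which symbol to use: it picks $n$ itself rather than your minimal $m$ with late first occurrence. If some $x$ has both occurrences among the first $n-d$ positions, then the positions preceding the first occurrence of $n$ must contain the first occurrences of all of $1,\dots,n-1$ (by the ordering convention) \emph{and} the second occurrence of $x$, forcing $n$'s first occurrence to sit at position $\geq n+1$. Hence every occurrence of $n$ is at distance at least $(n+1)-(n-d) = d+1$ from every occurrence of $x$, contradicting the diameter. No window counting, no case on $d$, and no auxiliary assumption is needed. I'd encourage you to redo the argument from this angle: the reason $n$ works better than your $m$ is that its first-occurrence position can be bounded below by $n+1$ cleanly, whereas the position $i$ of your $m$ only obeys $i > n-d$, which is too weak to make the distance count close.
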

\begin{proof}
Assume that there exists some symbol $x$ both of whose occurrences are within the first $n-d$ entries.  Thus the first occurrence of $n$ must be at position at least $n+1$, so the distance between $x$ and $n$ is at least $d+1$, a contradiction.
\end{proof}

\begin{lemma}
\label{closeones}
Let $d<n/2$ be the diameter of a double-$n$ string, and let $r_i$ be the number of symbols both of whose occurrences are within $d$ of either occurrence of $i$ for $1 \leq i \leq d+1$.  Then $r_i \leq 3d+i-n$.
\end{lemma}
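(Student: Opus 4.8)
The plan is to run a position-counting argument: compare the number of \emph{positions} that lie near an occurrence of $i$ with the number of \emph{occurrences} of symbols that land there. Fix $i$ with $1\le i\le d+1$. Since $d<n/2$ we have $i\le d+1\le n-d$, so by Lemma~\ref{distinctness} the first occurrence of the symbol $i$ is at position $i$; write $s_i>i$ for its second occurrence. Let $C_i$ be the set of positions of the string that lie within $d$ of position $i$ or within $d$ of position $s_i$. Because $i\le d+1$, the block of positions within $d$ of position $i$ is exactly $\{1,\dots,i+d\}$ (there is nothing to the left of position $1$), which has $i+d$ positions, while the block around $s_i$ has at most $2d+1$ positions (fewer if it runs off the right end of the string, or if the two blocks overlap). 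Hence $|C_i|\le(i+d)+(2d+1)=3d+i+1$.

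Next I would relate $C_i$ to $r_i$. The two positions $i$ and $s_i$ both lie in $C_i$, are distinct, and are the only positions occupied by the symbol $i$; so the number of occurrences of symbols \emph{other than} $i$ lying in $C_i$ equals $|C_i|-2\le 3d+i-1$. Now count these same occurrences from the symbol side. Every symbol $x\neq i$ is adjacent to $i$ (the diameter is $d$), so at least one occurrence of $x$ lies in $C_i$; and by definition a symbol is counted by $r_i$ precisely when \emph{both} of its occurrences lie in $C_i$. Thus each of the $r_i$ counted symbols contributes $2$ occurrences to $C_i$, while each of the remaining $n-1-r_i$ symbols contributes exactly one, for a total of $2r_i+(n-1-r_i)=r_i+n-1$ occurrences of non-$i$ symbols in $C_i$. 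Comparing with the position count gives $r_i+n-1\le 3d+i-1$, i.e. $r_i\le 3d+i-n$.

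I do not expect a genuine obstacle here; the real content is just the observation that $C_i$ is the union of a \emph{prefix block} of length $i+d$ and a block of length at most $2d+1$, rather than two full intervals of length $2d+1$ — and this is exactly where the hypothesis $d<n/2$ together with Lemma~\ref{distinctness} is needed (to force the first occurrence of $i$ to sit at position $i$). The only point requiring a little care is the bookkeeping of the symbol $i$ itself: its two occurrences are trivially ``within $d$ of an occurrence of $i$,'' so one should take $r_i$ to count symbols distinct from $i$ (equivalently, absorb $i$ into the $|C_i|-2$ above); with that convention the count closes off exactly to the stated bound.
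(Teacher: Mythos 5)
Your proof is correct and takes essentially the same route as the paper: both bound the number of positions near the occurrences of $i$ by $3d+i-1$ (using Lemma~\ref{distinctness} together with $d<n/2$ to pin the first occurrence of $i$ to position $i$, so the left block is a prefix of length $i+d$), and then compare with a count by symbols, where the $n-1$ other symbols each contribute at least one occurrence and the $r_i$ repeats contribute two. The paper's phrasing is more compressed; your version makes the double-counting and the handling of the symbol $i$ itself explicit, which is exactly what the paper leaves implicit.
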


\begin{proof} As $d <n/2$, Lemma \ref{distinctness} gives that the first $d+1$ symbols of such a double-$n$ string are $1, 2, \dots,d+1$.  For $1 \leq i \leq d+1$, there are only $i-1$ symbols before the first occurrence of $i$, so there are at most $3d+i-1$ symbols adjacent to $i$, of which $r_i$ of them are repeats.  Hence $n-1 \leq 3d+i-1-r_i$.
\end{proof}

\begin{cor}
\label{cordistinct}
For $1 \leq i \leq d+1$, and $d < n/2$, at most $3d+i-n$ of the symbols $1, 2,\dots, \hat{i}, \ldots ,d+1$,  are within $d$ of the second occurrence of $i$. (Here $\hat{i}$ means omit $i$).
\end{cor}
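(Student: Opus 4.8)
The plan is to deduce the corollary from the counting already done for Lemma~\ref{closeones}, but via a cleaner inclusion--exclusion that treats the two occurrences of $i$ separately. Let $A$ be the set of symbols $j\neq i$ having an occurrence within $d$ positions of the \emph{first} occurrence of $i$, and let $B$ be the set of symbols $j\neq i$ having an occurrence within $d$ positions of the \emph{second} occurrence of $i$. Since the string has diameter $d$, every symbol $j\neq i$ lies within $d$ of at least one occurrence of $i$, so $A\cup B=\{1,\dots,n\}\setminus\{i\}$ and therefore $|A\cap B|=|A|+|B|-(n-1)$.

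Next I would bound $|A|$ and $|B|$ by counting positions. Because $d<n/2$, Lemma~\ref{distinctness} forces the first $d+1$ symbols to be $1,2,\dots,d+1$ in order, so the first occurrence of $i$ is at position $i$; the positions within distance $d$ of it are the $i-1$ positions to its left together with the $d$ positions immediately to its right, giving $|A|\le (i-1)+d$. The second occurrence of $i$ has at most $d$ positions within distance $d$ on either side, so $|B|\le 2d$. Hence $|A\cap B|\le (i-1+d)+2d-(n-1)=3d+i-n$.

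Finally, each symbol $j$ among $1,2,\dots,\hat{i},\dots,d+1$ lies in $A$: its first occurrence is at some position in $\{1,\dots,d+1\}$, which is within distance $d$ of position $i$, the first occurrence of $i$. Consequently the symbols from $\{1,\dots,d+1\}\setminus\{i\}$ that are within $d$ of the second occurrence of $i$ form a subset of $A\cap B$, so there are at most $3d+i-n$ of them, as claimed.

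The step needing the most care is the position bookkeeping: one must invoke $d<n/2$ through Lemma~\ref{distinctness} to know the first occurrence of $i$ sits exactly at position $i$, must not double-count position $i$ itself, and must handle the left end of the string correctly when $i\le d+1$ (which is exactly the regime here, so the ``left'' count is $i-1$ rather than $d$). I would avoid routing the argument through the quantity $r_i$ of Lemma~\ref{closeones} directly: a symbol whose single first occurrence happens to be within $d$ of \emph{both} occurrences of $i$ need not have both of its occurrences adjacent to $i$, so it need not be counted by $r_i$, whereas the $A$/$B$ split handles it automatically.
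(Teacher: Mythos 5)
Your proof is correct, and it takes a genuinely different route from the paper's. The paper derives the corollary in one line from Lemma~\ref{closeones}: since each $j\in\{1,\dots,d+1\}\setminus\{i\}$ has its first occurrence (at position $j$) within $d$ of the first occurrence of $i$ (at position $i$), any such $j$ whose second occurrence is also within $d$ of the second occurrence of $i$ has both of its occurrences adjacent to $i$, hence contributes to the count $r_i\leq 3d+i-n$. You instead set up a direct inclusion--exclusion on the sets $A$ and $B$ of symbols having an occurrence near the first, respectively second, occurrence of $i$, bound $|A|\le i-1+d$ and $|B|\le 2d$ by counting positions (the same position count that powers the proof of Lemma~\ref{closeones}), and use $A\cup B=\{1,\dots,n\}\setminus\{i\}$ to conclude $|A\cap B|\le 3d+i-n$. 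This bypasses $r_i$ entirely. Your closing observation is a real distinction, not just a stylistic one: $|A\cap B|$ and $r_i$ are not the same quantity, since when the two windows around the occurrences of $i$ overlap, a single occurrence of $j$ can lie near both occurrences of $i$, putting $j$ in $A\cap B$ without $j$ contributing to $r_i$. Thus your argument proves the corollary under the broader reading ``some occurrence of $j$ is within $d$ of the second occurrence of $i$,'' while the paper's one-liner really covers only the reading ``the second occurrence of $j$ is within $d$ of the second occurrence of $i$.'' That narrower reading is the one actually invoked in the proof of Theorem~\ref{klowerbound} (the relevant blocks $B_j$ are far enough to the right that only second occurrences can land in them), so both proofs serve the application; yours is the cleaner self-contained version and makes the statement unambiguous.
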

\begin{proof}
This follows directly from Lemma~\ref{closeones} since each of the symbols $1,\ldots, \hat{i},$ $\ldots, d+1$ occurs within $d$ of the first occurrence of $i$.
\end{proof}
We are now ready to prove the lower bound.

\begin{theorem}
\label{klowerbound}
Let $r$ be a positive integer.  Then $\delta(23r) \geq 8r$.  Thus the asymptotic approval ratio for double-$n$ strings is bounded below by $8/23$.
\end{theorem}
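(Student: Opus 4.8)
The plan is to suppose, for contradiction, that there is a double-$n$ string with $n = 23r$ and diameter $d < 8r$ (by Lemma~\ref{lem:n/2} we may assume $d < n/2$, and by the remark after that lemma we may normalize the order of first occurrences), and derive a counting contradiction. The basic resource is Lemma~\ref{closeones}: for each $i$ with $1 \le i \le d+1$, the number $r_i$ of symbols both of whose occurrences lie within $d$ of an occurrence of $i$ satisfies $r_i \le 3d + i - n$. Set $d+1 = n/2 - s$ for some positive integer parameter $s$ (so $s \ge 1$, and we aim to show $s$ cannot be as large as the bound $d<8r$ forces, namely $s \ge n/2 - 8r = 23r/2 - 8r = 7r/2$). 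Each symbol in the "initial block" $1,\dots,d+1$ has few repeats close to it, which will let me conclude that the second occurrences of these symbols are spread out across the string; meanwhile, every symbol must still be adjacent to symbol $1$, and adjacency is a strong constraint.

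The key steps, in order: (1) By Lemma~\ref{distinctness}, the first $d+1$ positions are exactly $1,2,\dots,d+1$ in order, so the "initial symbols" $1,\dots,d+1$ each have their first occurrence in the first half of the string. (2) Use Corollary~\ref{cordistinct}: among the initial symbols other than $i$, at most $3d+i-n$ lie within $d$ of the \emph{second} occurrence of $i$; since $1$ is adjacent to everything, this bounds how clustered the second occurrences of initial symbols can be around the second occurrence of any single initial symbol. (3) Translate these bounds into a statement about positions: the second occurrences of the $d+1$ initial symbols must occupy roughly the range of positions from about $n - 2d$ onward, and no window of width $2d+1$ can contain more than about $3d - n + (d+1) = 4d - n + 1$ of them; combined with there being $d+1$ of them, this forces $d+1$ to be small relative to $d$, i.e.\ forces a lower bound on $d$. (4) Optimize the choice of how to partition the positions into windows (this is where the constant $23$ and the ratio $8/23$ will emerge — presumably three or four carefully chosen windows, or a weighted average of the inequalities from Lemma~\ref{closeones} over several values of $i$, exactly balance to give $8d \ge 23r$ when $n = 23r$). (5) Conclude $d \ge 8r$, contradicting $d < 8r$; then divide by $n$ and take $r \to \infty$ (using monotonicity $\delta(n-1)\le\delta(n)$ to pass from the subsequence $n=23r$ to all $n$) to get $\Delta \ge 8/23$.

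I expect the main obstacle to be step (4): making the packing argument tight. The inequalities $r_i \le 3d+i-n$ individually are weak, and the ratio $8/23$ is not an obvious round number, so the proof almost certainly combines the constraint from Lemma~\ref{closeones} (applied to several indices $i$, or to $i=1$ and $i=d+1$) with a second, independent constraint — likely a pigeonhole on how the second occurrences of the initial symbols distribute among a bounded number of length-$(2d)$ blocks in the second half of the string, where the number of blocks and their placement must be chosen so the two families of inequalities become simultaneously sharp. Getting the bookkeeping exactly right — tracking which symbols are "initial," where their second occurrences can fall, and how many blocks to use — is the delicate part; everything else is Lemma~\ref{distinctness}, Lemma~\ref{closeones}, and arithmetic.
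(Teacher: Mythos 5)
Your proposal correctly identifies the ingredients — proof by contradiction with $d<8r$, Lemma~\ref{distinctness}, Lemma~\ref{closeones}/Corollary~\ref{cordistinct}, and a packing/counting argument on the second occurrences of early-labeled symbols — and you correctly recognize that step (4) is where the argument must close. But step (4) as sketched does not close, and the reason is concrete. Your window bound uses the worst case of Corollary~\ref{cordistinct}, $i=d+1$, giving ``at most $4d-n+1$ of the initial symbols per window of width $2d+1$.'' With $n=23r$ and $d=8r-1$, that bound is $4d-n+1 = 9r-3$, while there are only $d+1=8r$ initial symbols to place. So a single window could, as far as this inequality knows, hold all of them; no number of windows yields a contradiction. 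The weighted-average variant you mention would face the same problem if the weights are chosen without knowing which $i$ sits where.

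What actually makes the paper's proof work is a different organization. First, the tracked set is not all of $1,\dots,d+1$ but $1,\dots,7r+1$ (chosen because $\delta(21r+1)\ge 7r$ forces $d\ge 7r$, and these are exactly the symbols whose first occurrence is more than $d$ to the left of the first occurrence of the symbol $n-d = 15r+1$). Second, the ``windows'' are not an arbitrary partition of positions: they are three blocks $B_1,B_2,B_3$ of length $d$ anchored at the two occurrences of the specific symbol $15r+1$, and the pigeonhole ``every tracked symbol's second occurrence lands in some $B_j$'' follows precisely from that anchoring. Third — and this is the step your sketch lacks — the count inside a block $B_j$ is bounded not via $i=d+1$ but via the \emph{minimal label} $m_j$ whose second occurrence lies in $B_j$: any pair of second occurrences in the same block are within $d$ of each other, so Corollary~\ref{cordistinct} applied at $m_j$ gives $|B_j|\le 1 + (3d + m_j - n)$. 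The chain then tightens: the block containing symbol $1$ has $m_1=1$, so $k_1 \le r-1$; hence the minimal label not yet placed is $\le r$, so the next block has $k_2 \le 2r-2$; hence the next minimal is $\le 3r-2$, so $k_3 \le 4r-4$; totaling $7r-7 < 7r+1$, a contradiction. Without this ``smallest label drives the bound'' mechanism, the packing inequality is too weak by roughly a factor of three, and the argument as you have outlined it would not produce the constant $8/23$ (or any nontrivial constant better than $1/3$). So there is a genuine gap: you have the right tools but not the decisive idea of anchoring the blocks at symbol $n-d$ and bounding each block by its minimal label via Corollary~\ref{cordistinct}.
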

\begin{proof}
Let $n = 23r$ and let $S$ be a double-$n$ string with diameter $d$. Suppose $d < 8r$. Since $d$ is an integer we have $d \leq 8r - 1$. 
Note that $d \geq \delta(23r) \geq \delta(21r + 1) \geq 7r$ by Lemma \ref{lem:3r+1}. 

By Lemma \ref{distinctness} the first $n-d \geq 23r -8r + 1 =15r + 1$ symbols in $S$ are distinct (and in order). Now since $d < 8r$ the first occurrence of the symbol labeled $15r+1$ is not within $d$ of the first occurrence of $i$ for $1 \leq i \leq 7r+1$. Thus for any such $i$ we must have the second occurrence of $i$ occurring in one of three sets of positions, namely the block $B_1$ of length $d$ following the first occurrence of $15r+1$, the block $B_2$ of length $d$ ahead of the second occurrence of $15r+1$, or the block $B_3$ of length $d$ following the second occurrence of $15r+1$.  These blocks are illustrated in Figure~\ref{klawelowerbounddiagram}. Let $k_j$ be the number of symbols in $1 \leq i \leq 7r+1$ with their second occurrence in block $B_j$. From the preceding observation we have $k_1 + k_2 + k_3 \geq 7r + 1$ (conceivably such a second occurrence of $i$ could be in both $B_1$ and $B_2$ if they overlap).  

Note that any pair of symbols in $B_j$ lie within $d$ of each other. Suppose without loss of generality that the second occurrence of 1 lies in $B_1$. For any $i$ with $1 < i  \leq 7r+1$ with the second occurrence of $i$ in $B_1$, both occurrences of $i$ lie within $d$ of an occurrence of 1, since $d \geq 7r$. By Corollary \ref{cordistinct}, the number of such $i$ is at most 
$$3d + 1 - n  \leq 3(8r-1) + 1 - 23r = r - 2,$$ giving $k_1 \leq 1 + r - 2 = r - 1$. 

Let $x$ be the minimal number such that the second occurrence of $x$ is not in $B_1$. Then $x \leq r$ since $k_1 \leq r - 1$. Without loss of generality suppose the second occurrence of $x$ is in $B_2$. Again, by Corollary \ref{cordistinct}  there are at most
\[3d+r-n \leq 3(8r-1)+r-23r = 2r-3\]
symbols $i$ with $1 \leq i \leq 7r+1$ other than $x$ in $B_2$, so $k_2 \leq 2r-2$.

Similarly, let $y$ be the smallest symbol (in value) whose second occurrence is in $B_3$ (i.e., is not in $B_1$ or $B_2$).  There are at most $k_1+k_2$ symbols in $B_1 \cup B_2$, so $y \leq 3r-2$.  Using Corollary \ref{cordistinct} one last time, we see that there are at most $3d+(3r-2)-n\leq 3(8r-1)+(3r-2)-23r = 4r-5$ symbols $i \neq y$ with $1 \leq i \leq 7r+1$ in $B_3$, so $k_3 \leq 4r-4$.  However, this is a contradiction: we needed $k_1+k_2+k_3 \geq 7r+1$, but
\[k_1+k_2+k_3 \leq (r-1)+(2r-2)+(4r-4) = 7r-7.\]
Therefore we could not have $d<8r$, proving the theorem.
\end{proof}

A general argument showing $\delta(br)\geq ar$, for large $r$, leads to the inequalities $b<3a$ and $23a\leq 8b$. Thus the lower bound of Theorem \ref{klowerbound} is the best possible asymptotic bound using this argument.  
Now we turn to the upper bound. 

\begin{figure}
\centering
\includegraphics[width=4in]{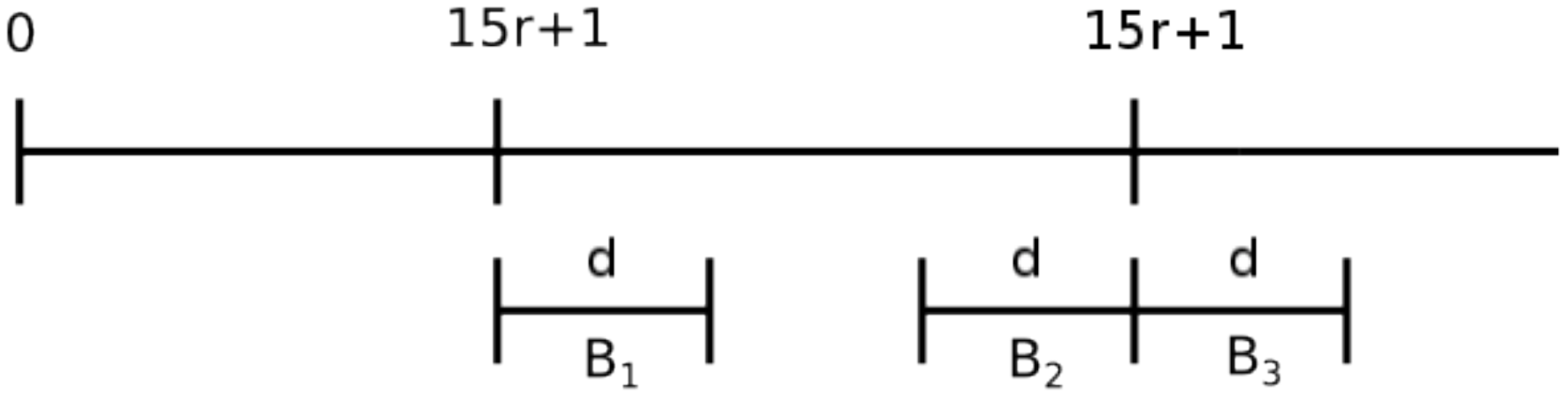}
\caption{$B_1$, $B_2$, and $B_3$ (in the case that they are disjoint).}
\label{klawelowerbounddiagram}
\end{figure}

\begin{theorem}
\label{klaweupperbound}
For any $n>0$, there exists a double-$n$ string with diameter $d \leq 5\ceil{\frac{n}{13}}-1$.  Hence the asymptotic approval ratio for double-$n$ strings is bounded above by $5/13$.

\end{theorem}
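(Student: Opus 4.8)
The plan is to reduce to the case when $n$ is a multiple of $13$ and then build the required string by ``blowing up'' a single small base string, exactly as in the proof of Lemma \ref{lem:n/2} (which blows up the diameter-$1$ double-$4$ string $1,2,3,4,1,3,2,4$). The reduction is immediate from the remark at the start of this section: if $m=\lceil n/13\rceil$ and we have a double-$(13m)$ string of diameter at most $5m-1$, then deleting both occurrences of $13m-n$ of its symbols produces a double-$n$ string of diameter at most $5m-1=5\lceil n/13\rceil-1$. So it is enough to exhibit, for each $m\ge 1$, a double-$(13m)$ string of diameter at most $5m-1$.

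The heart of the argument is an explicit double-$13$ string $W=w_1w_2\cdots w_{26}$ over $\{1,\dots,13\}$ of diameter $4$. Such a string is diameter-optimal, since $\delta(13)\ge 4$ by Lemma \ref{lem:3r+1} (taking $r=4$); its existence --- equivalently, the statement $\delta(13)=4$ --- is established by writing down one suitable string and checking the finitely many pairs of symbols. The search for $W$ is short: by Lemma \ref{distinctness} its first $9$ entries must be $1,2,\dots,9$, and Corollary \ref{cordistinct} strongly restricts where the second occurrences of the small symbols can sit.

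Given $W$, partition a fresh alphabet of $13m$ symbols into $13$ ordered blocks $B_1,\dots,B_{13}$, each of size $m$, and let $T$ be the string obtained from $W$ by replacing each occurrence of a symbol $c$ by the block $B_c$ written out in its fixed order. Since each $c\in\{1,\dots,13\}$ occurs exactly twice in $W$, each of the $13m$ new symbols occurs exactly twice in $T$, and $T$ has length $26m$; thus $T$ is a double-$(13m)$ string. To bound $\mathrm{diam}(T)$: two distinct new symbols in a common block $B_c$ occur within $m-1$ positions of one another inside either copy of $B_c$; and if they lie in distinct blocks $B_c$ and $B_{c'}$, then because $\mathrm{diam}(W)=4$ there are occurrences of $c$ and $c'$ at positions $a<b$ of $W$ with $b-a\le 4$, and the corresponding blocks of $T$ occupy position ranges $[(a-1)m+1,\,am]$ and $[(b-1)m+1,\,bm]$, so any symbol of the former and any of the latter lie at most $bm-\bigl((a-1)m+1\bigr)=(b-a+1)m-1\le 5m-1$ apart. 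Hence $\mathrm{diam}(T)\le\max(m-1,\,5m-1)=5m-1$, so $\delta(13m)\le 5m-1$. Combined with the reduction this gives $\delta(n)\le 5\lceil n/13\rceil-1$ for every $n>0$, and since $\bigl(5\lceil n/13\rceil-1\bigr)/n\to 5/13$ we get $\Delta\le 5/13$.

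The one genuinely combinatorial step --- and the main obstacle --- is producing the diameter-$4$ double-$13$ string $W$ and verifying that no two of its symbols are at distance $5$ or more; once $W$ is in hand, the blow-up and the diameter estimate above are routine, and the reduction to multiples of $13$ is trivial.
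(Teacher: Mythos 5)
Your argument follows the paper's proof almost step for step: both take an explicit diameter-$4$ double-$13$ string, replace each symbol with a block of $k=\lceil n/13\rceil$ consecutive fresh symbols, and bound the diameter of the blowup by exactly the same arithmetic, $3k+(2k-1)=5k-1$. The only organizational difference is cosmetic --- the paper removes the symbols larger than $n$ after blowing up, whereas you first trim $13m$ down to $n$ using the monotonicity $\delta(n-1)\le\delta(n)$ --- but this is the same reduction in two different orders.

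The one genuine gap is the one you flag yourself: you never actually produce the diameter-$4$ double-$13$ string $W$. Everything in your argument is conditional on its existence, and you only sketch a search strategy (via Lemma~\ref{distinctness} and Corollary~\ref{cordistinct}) rather than exhibiting a witness and verifying it. The paper supplies one:
\[1,2,3,4,5,6,7,8,9,10,1,11,6,12,13,5,4,7,11,10,9,2,3,13,12,8.\]
Without writing down such a string and checking its diameter, the proof is incomplete; with it in hand, your blowup and diameter estimate are correct and coincide with the paper's.
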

\begin{proof}
Note that the double-13 string
\[1,2,3,4,5,6,7,8,9,10,1,11,6,12,13, 5, 4, 7, 11,10, 9, 2, 3,13,12, 8\]
has diameter 4, meaning that any two symbols in it appear somewhere in the double-13 string separated by no more than three other elements.  This yields a general construction for double-$n$ strings of any length.  Let $k=\ceil{\frac{n}{13}}$.  Then replacing each symbol $i$ in the above string with the substring 
\[k(i-1)+1, k(i-1)+2,\dots, ki,\]
and removing any symbols in the resulting string that are greater than $n$, yields a double-$n$ string.  An example of this string for $n=34$ ($k=3$) is shown in Figure~\ref{klaweupperboundexample}.  Because the diameter of the above double-13 string is 4, any two symbols $1 \leq i<j \leq n$ are within substrings that are separated by at most three substrings of length $k$.  Also, $i$ and $j$ are at worst on the far ends of their substrings, giving a maximum total distance between $i$ and $j$ in the new string of\\
\[3\ceil{\frac{n}{13}}+\left(2\ceil{\frac{n}{13}}-1\right) =5\ceil{\frac{n}{13}}-1. \qedhere \]
\end{proof}

\begin{figure}
\centering
{\fontsize{10}{8}
\[(1,2,3)(4,5,6)(7,8,9)(10,11, 12)(13,14, 15)(16,17, 18)(19,20, 21)(22,23, 24)\]
\[(25,26, 27)(28,29, 30)(1,2, 3)(31,32,33)(16,17,18)(34)(13,14,15)(10,11,12)\]
\[(19,20, 21)(31,32, 33)(28,29, 30)(25,26, 27)(4,5, 6)(7,8, 9)(34)(22,23, 24)\]}
\caption{A double-34 string with diameter $\leq 14$ constructed as in Theorem~\ref{klaweupperbound}.  Symbols are grouped together by parentheses to elucidate its construction.  Some groupings have fewer than three elements since symbols larger than 34 in value are removed.  Empty groupings are also omitted.}
\label{klaweupperboundexample}

\end{figure}

\section{A double-interval society lower bound}
In the previous section we considered double-$n$ strings as examples of societies with low approval ratios.  These examples  give upper bounds for the minimal guaranteed approval ratio for any society.  In this section, we give lower bounds for the minimal guaranteed approval ratio, by considering how general pairwise-intersecting double-interval societies force conditions on the number of intervals that can intersect a given interval at its endpoints.  This approach largely ignores the geometry of the approval sets and considers only combinatorial constraints.  

\begin{theorem}
\label{lowerbound}
The approval number $a(S)$ of any $n$-voter society $S$ satisfies
\begin{equation} 
\label{nicebound} 
a(S) \geq \left\lceil 2n+\frac{1}{2}-\sqrt{3n^2-n+\frac{1}{4}} \right\rceil .
\end{equation}
Then the approval ratio satisfies
\begin{equation}
\label{niceapproval}
\frac{a(S)}{n}  \geq 2-\sqrt{3} + \frac{3+\sqrt{3}}{6n} - \frac{\sqrt{3}}{24n^{2}} 
\approx 0.268 + \frac{0.789}{n} -  \frac{1.732}{24n^{2}}.
\end{equation}
Alternatively, the size $n$ of a society achieving a given approval number $a(S)$ is bounded above by
\begin{equation}
\label{nicen}
n \leq \left\lfloor 2a(S) - \frac32 + \sqrt{3(a(S))^2 - 5 a(S) + \frac94} \right\rfloor.
\end{equation}
\end{theorem}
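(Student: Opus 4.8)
The plan is to reduce the theorem to a purely combinatorial counting inequality about how many other voters' approval sets can meet a given approval set near its endpoints, and then to optimize that inequality. Fix a society $S$ with $n$ voters. For each voter $v$, the approval set $A_v$ consists of two disjoint closed intervals; call their four endpoints the \emph{nodes} of $v$. A standard observation (in the spirit of Helly's theorem on the line, and exactly the kind of argument Berg et al.\ use) is that if a family of intervals on $\R$ pairwise intersects, then some common point lies in all of them; more usefully here, the maximum overlap of a family of intervals is achieved at one of the interval endpoints. So $a(S)=\max_p a(p)$ is attained at a node of some voter. I would therefore set up the counting around the (at most) $4n$ nodes: at each node $p$, $a(p)$ voters have one of their two intervals containing $p$.

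The heart of the argument is a double-counting of \emph{incidences} between voters and nodes. Because the society is pairwise-intersecting, every pair of voters $u,v$ shares a point; that shared point can be taken to be a node of $u$ or of $v$, so each of the $\binom{n}{2}$ pairs contributes at least one incidence ``voter $w$ is approved at a node of the other voter.'' On the other hand, at any single node $p$ there are at most $a(S)$ voters approving $p$, so a node of voter $v$ witnesses at most $a(S)-1$ pairs $\{v,\cdot\}$ beyond $v$ itself — but $v$ has (generically) up to $4$ nodes, and the two intervals being disjoint means each interval's two endpoints behave like the one-interval case. Carefully bookkeeping this — how many voters can be ``caught'' at the left versus right endpoint of each of $v$'s two intervals, using that intervals are closed and disjoint so a voter approving near $v$'s first interval is distinct in the relevant sense from one near $v$'s second — yields an inequality of the shape
\[
\binom{n}{2} \;\le\; (\text{const})\cdot n\cdot\bigl(a(S)-1\bigr)\;+\;(\text{lower-order in }a(S)),
\]
and more precisely a quadratic inequality $n^2 - (\text{linear in }a(S))\,n + (\text{quadratic in }a(S)) \le 0$. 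Inverting that quadratic in $n$ (resp.\ solving for $a(S)$) and taking the appropriate ceiling/floor produces \eqref{nicebound} and \eqref{nicen}; \eqref{niceapproval} is then just the Taylor expansion of the bound from \eqref{nicebound} divided by $n$, a routine computation matching the claimed coefficients $2-\sqrt3$, $(3+\sqrt3)/6$, $-\sqrt3/24$.

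The main obstacle — and the step I would spend the most care on — is getting the coefficients in the quadratic exactly right, because the three displayed inequalities must be mutually consistent (the discriminant $3n^2-n+\tfrac14$, the $2n+\tfrac12$ shift, and the symmetric $3a^2-5a+\tfrac94$ form all have to drop out of the \emph{same} counting). This means I must pin down precisely how many voters a given interval of $v$ can ``see'' at each of its two endpoints: the clean version is that across $v$'s two intervals, the voters approving $v$ somewhere split, and each endpoint of each interval can be a witnessing point for at most $a(S)-1$ other voters, but one must avoid overcounting voters who meet $v$ at two of $v$'s nodes (e.g.\ a voter whose interval straddles $v$'s first interval). I expect the correct accounting to be: each voter contributes, through its four nodes, at most roughly $2(a(S)-1)$ new ``sightings'' after correcting for the closed-interval double-meetings, which when summed against $\binom n2$ gives $n(n-1)/2 \le 2n(a(S)-1) + (\text{correction})$; matching this against $2n+\tfrac12-\sqrt{3n^2-n+\tfrac14}$ forces the correction terms, and verifying that forced form is self-consistent is the real content. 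Everything after that — the ceiling in \eqref{nicebound}, the algebraic inversion to \eqref{nicen}, and the asymptotic expansion \eqref{niceapproval} — is bookkeeping that I would carry out but not belabor.
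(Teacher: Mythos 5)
Your framework — double-counting incidences between voters and interval endpoints, then inverting a quadratic — is the same as the paper's, but you have explicitly left the two combinatorial ideas that actually make the counting work as gaps, and they are not the routine bookkeeping you suggest. The paper classifies, for each interval $I$, the other intervals meeting $I$ into four types: those covering only the left endpoint ($L(I)$), only the right endpoint ($R(I)$), both endpoints ($B(I)$), or neither endpoint but still intersecting $I$, i.e.\ contained strictly inside $I$ ($C(I)$). Pairwise intersection gives $\sum_i [L(A_i)+L(A_i')+R(A_i)+R(A_i')+B(A_i)+B(A_i')+C(A_i)+C(A_i')]\geq n(n-1)$. The first idea you are missing is the duality $\sum B = \sum C$: an interval $J$ covers both endpoints of $I$ precisely when $I$ lies in the ``center'' of $J$. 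Without this, the $C$ terms (intervals nested inside $I$) are not controlled by any endpoint count and the upper bound does not close. The second missing idea is the ordering refinement: sorting the $2n$ intervals by left endpoint, the $k$th one can have at most $k-1$ intervals covering its left endpoint, so the naive bound $2n(a(S)-1)$ on $\sum[L+B]$ improves to $2n(a(S)-1)-\tfrac{a(S)(a(S)-1)}{2}$; this correction is exactly what produces the $3n^2-n+\tfrac14$ discriminant rather than something cruder. You flag that ``getting the coefficients exactly right'' is the real content and then do not do it — but that is precisely where these two lemmas live, so as written the proposal does not prove the theorem. Once both observations are in hand, adding the $L$- and $R$-bounds and substituting $\sum C$ for $\sum B$ yields $n(n-1)\leq(4n-a(S))(a(S)-1)$, from which \eqref{nicebound}, \eqref{niceapproval}, and \eqref{nicen} follow by the algebra you describe.
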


\begin{proof}
Let $A_i$ and $A_i'$ represent the left and right intervals, respectively, of voter $i$'s approval set in the $n$-voter society $S$.  Without loss of generality we may assume no two interval endpoints coincide.
For any interval $I$, 
define numbers $L(I)$, $R(I)$, $B(I)$, and $C(I)$ to keep track of the number of other intervals that intersect $I$ in various ways.  
Let $L(I)$ count the number of other intervals that, of two endpoints of $I$, contain only the \emph{left} endpoint.
Let $R(I)$ count the number of other intervals that, of two endpoints of $I$, contain only the \emph{right} endpoint.
Let $B(I)$ count the number of other intervals that contain \emph{both} endpoints of $I$.
Let $C(I)$ count the number of other intervals that intersect $I$ but contain \emph{neither} endpoint of $I$, and are hence in the ``center'' of $I$.

For example, in Figure~\ref{examplesociety}, we see that $L(A')=2$, $R(A')=0$, $C(A')=3$, and $B(A')=0$.  Also $L(C')=0$, $R(C')=1$, $C(C')=0$, and $B(C')=1$.  Since each set must intersect all $n-1$ other sets,
$$
L(A_i)+L(A_i')+R(A_i)+R(A_i')+C(A_i)+C(A_i')+B(A_i)+B(A_i') \geq n-1.
$$ 
Then clearly
\begin{eqnarray}
\sumi \left[ L(A_i)+L(A_i')+R(A_i)+R(A_i')+C(A_i')+C(A_i)+B(A_i)+B(A_i') \right] \nonumber \\
\geq n(n-1). 
\label{sumbound}
\end{eqnarray}

Note that an interval $J$ covers both endpoints of another interval $I$ and contributes $1$ to the count $B(I)$ exactly when $I$ is the in the center of $J$ and contributes $1$ to the count $C(J)$.  
This implies: 
\begin{equation}
\label{BC} 
\sumi \left[ B(A_i)+B(A_i') \right] = \sumi \left [ C(A_i)+C(A_i') \right].
\end{equation}

Notice that given an approval number $a(S)$, each interval may have at most $a(S)-1$ other sets intersecting its left endpoint. This gives an initial bound
\[\sumi \left[ L(A_i)+L(A_i')+B(A_i)+B(A_i') \right] \leq 2n(a(S)-1).\]
and similarly, considering right endpoints:  
\[\sumi \left[ R(A_i)+R(A_i')+B(A_i)+B(A_i') \right] \leq 2n(a(S)-1).\]
However, if the $2n$ intervals are ordered by the left endpoint, then the $k$th interval under this ordering from left to right can have at most $k-1$ intervals intersecting its left endpoint, not $a(S)-1$.  Thus we need to adjust the formulas above, to obtain:
\[\sumi \left[ L(A_i)+L(A_i')+B(A_i)+B(A_i') \right] \leq 2n(a(S)-1)-\frac{a(S)(a(S)-1)}{2}, \]
\[\sumi \left[ R(A_i)+R(A_i')+B(A_i)+B(A_i') \right] \leq 2n(a(S)-1)-\frac{a(S)(a(S)-1)}{2}.\]
Adding these equations and applying equation~(\ref{BC}) yields\\
\begin{eqnarray}
\sumi \left[ L(A_i)+L(A_i')+R(A_i)+R(A_i')+C(A_i')+C(A_i)+B(A_i)+B(A_i') \right] \nonumber \\
\leq 4n(a(S)-1)-a(S)(a(S)-1). \nonumber
\end{eqnarray}
So by equation~(\ref{sumbound}), we see\\
\[ \label{niceeqnbound}(4n-a(S))(a(S)-1) \geq n(n-1).\]
Solving this quadratic inequality for $a(S)$, and rounding up to the nearest integer gives the conclusion (\ref{nicebound}).  Using $(1-x)^{1/2} \leq 1-(1/2)x$ gives conclusion (\ref{niceapproval}). Solving the quadratic inequality for $n$ and rounding down gives the conclusion (\ref{nicen}).
\end{proof}

Values of $a(S)$ and the corresponding bounds on $n$ and the approval ratio derived from equation~(\ref{nicen}) are given in Table~\ref{boundresults}.

\begin{table}
  \centering
  \begin{tabular}{c|cc|cc}
  \hline
  $a(S)$ & $n$ & Approval & Observed & Observed
  \\
  & & Ratio&  $n$ & Approval Ratio\\
  \hline
  $2$ & $\leq 4$ & $\geq 0.500$ & 4 & 0.500\\
  $3$ & $\leq 8$ & $\geq 0.375$ & 8 & 0.375\\
  $4$ & $\leq 12$ & $\geq 0.333$ & 12 & 0.333\\
  $5$ & $\leq 15$ & $\geq 0.333$ & 15 & 0.333\\
  $6$ & $\leq 19$ & $\geq 0.316$ & 18 & 0.333\\
  $7$ & $\leq 23$ & $\geq 0.304$ & 21 & 0.333\\
  $8$ & $\leq 26$ & $\geq 0.308$ & 24 & 0.333\\
  $9$ & $\leq 30$ & $\geq 0.300$ & 27 & 0.333\\
  $10$ & $\leq 34 $ & $\geq 0.294$ & 30 & 0.333\\
  $11$ & $\leq 38$ & $\geq 0.289$ & 32 & 0.344\\
  $12$ & $\leq 41 $ & $\geq 0.293$ & 35 & 0.343\\
\hline
  \end{tabular}
  \bigskip
  \caption{On the left, this table shows for a given approval number the largest $n$ that is given by inequality (\ref{nicen}) as well as the resulting bound on the approval ratio derived from inequality (\ref{nicebound}).  On the right, this table shows, for a given approval number, known examples of the largest $n$ that has this approval number and the observed approval ratio in that case, obtained by a modification of a double-$n$ string construction.}
  \label{boundresults}
\end{table}

\section{Modifying double-$n$ string societies}

In this section we give an example of a double-interval society with an approval ratio lower than the bound given by Theorem \ref{klowerbound}, thus showing that
double-$n$ strings do not always provide examples of societies with minimal approval ratios.  
We will require a new notation, called the \emph{endpoint representation} of a society.  We will encode a society as a sequence of symbols (corresponding to the approval sets) representing the order of the endpoints of all the approval sets, each prefixed by a ``+'' or a ``$-$' to denote a left or right endpoint respectively.  For example, the society in Figure~\ref{examplesociety} is represented as
\[+A+C+B-A+D-C+A-B-D+C+B-C+D-B-D-A.\]

\begin{prop}
\label{counterexample}
There exists a society of size $n=8$ with approval number 3.  Hence there exist $n$ for which double-$n$ strings do not produce the lowest possible approval numbers. 
\end{prop}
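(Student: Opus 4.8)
The goal is to exhibit a single pairwise-intersecting double-interval society on $n=8$ voters whose approval number is $3$, matching the bound $n\le 8$ from inequality~(\ref{nicen}) with $a(S)=3$. Since Lemma~\ref{lem:3r+1} gives $\delta(10)\geq 3$ forcing any double-$10$ string society to have $n\le 9$ for approval number $3$ only if one exists at all, and in fact the double-$n$ string route caps out earlier, I would not try to produce this example from a double-$8$ string; the point of the proposition is precisely that no double-$8$ string achieves diameter $2$ (one would need each of $8$ symbols adjacent to $7$ others with only $d$ left-neighbors and $2d$ right-neighbors of symbol $1$, forcing $3d\ge 7$, i.e. $d\ge 3$). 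So the plan is to build an \emph{ad hoc} society directly in the endpoint representation.

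First I would search for the configuration by hand, guided by the equality analysis in the proof of Theorem~\ref{lowerbound}: with $a(S)=3$ and $n=8$ the inequality $(4n-a(S))(a(S)-1)\ge n(n-1)$ reads $29\cdot 2=58\ge 56$, so there is almost no slack, meaning the extremal society must have nearly every left endpoint covered by exactly $a(S)-1=2$ other intervals and the $C$/$B$ bookkeeping in equation~(\ref{BC}) essentially tight. This tells me the $16$ intervals should be laid out so that, reading left to right, overlaps are as uniform as possible — roughly a ``thickness $3$'' pattern that wraps around using the two-interval freedom, analogous to how the double-$n$ string $1,\dots,n,1,\dots,n$ works but tuned to keep the thickness at $3$ while still making all $\binom{8}{2}=28$ pairs intersect. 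I would write down a candidate sequence of $32$ signed symbols (e.g. something like $+A+B+C-A+D-B+E-C+F-D+G-E+H-F+A-G+B-H+C\cdots$), adjusting until it closes up into a valid endpoint word: every symbol appears exactly twice with a $+$ before its $-$, and the two intervals of each voter are disjoint.

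Then the verification has two parts, both finite and mechanical. (1) \emph{Pairwise intersection:} for each of the $28$ pairs $\{i,j\}$ exhibit a platform (a gap between two consecutive endpoints in the word) lying in one of $i$'s intervals and one of $j$'s intervals; equivalently, sweep the word from left to right maintaining the set of currently-open symbols and check that every pair co-occurs in some open set. (2) \emph{Approval number exactly $3$:} the same sweep records, at each of the $31$ gaps, the number of open intervals; I must check this count never exceeds $3$, and is $\ge 3$ somewhere (the latter is automatic once (1) holds, since $n=8>$ would otherwise contradict Helly-type counting — indeed if $a(S)\le 2$ then the first displayed inequality fails). The "hence" clause is then immediate: a double-$8$ string society has approval number $d+1$ where $d=\delta(8)$, and the counting argument above gives $\delta(8)\ge 3$, so the least approval number obtainable from a double-$8$ string is $4>3$.

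The main obstacle is purely the search in step one: because the counting inequality is so tight at $(n,a(S))=(8,3)$, the space of valid configurations is small and rigid, so a naive attempt to wrap a thickness-$3$ band around $8$ voters will typically either leave some pair non-intersecting or push the thickness to $4$ at one spot. I expect to need to exploit the disjoint-interval structure cleverly — using each voter's second interval to ``reach across'' to the far voters that its first interval misses — and to iterate a few times before the word closes up. Once a valid word is found, writing it into the paper in the endpoint representation (as was done for Figure~\ref{examplesociety}) together with the sweep table constitutes a complete and easily checkable proof.
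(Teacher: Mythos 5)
Your plan correctly identifies the structure of the argument: an explicit pairwise-intersecting double-interval society on $8$ voters must be exhibited (in endpoint representation, verified by a left-to-right sweep), and the ``hence'' clause follows from the counting bound $n \le 3d+1$ for double-$n$ strings, which gives $\delta(8)\ge 3$ and hence approval number at least $4$ for any double-$8$ string society. That second half of your proposal is correct and is essentially identical to the paper's argument. Your tightness analysis of the inequality $(4n-a(S))(a(S)-1)\ge n(n-1)$ at $(n,a(S))=(8,3)$ is also correct and is a reasonable heuristic for the search.

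However, the proposal has a genuine gap: it never actually produces the witness. For an existence statement, a description of how one \emph{would} search for the object is not a proof. You write a partial candidate word $+A+B+C-A+D-B+E-C+F-D+G-E+H-F+A-G+B-H+C\cdots$ and then say you would ``iterate a few times before the word closes up,'' explicitly flagging the search as the ``main obstacle.'' Until that search terminates with a concrete $32$-symbol endpoint word whose sweep never exceeds $3$ and realizes all $\binom{8}{2}=28$ adjacencies, the proposition is unproved. The paper does supply such a word, obtained by starting from the double-$8$ string $ABCDEFGHEADFCGBH$ (diameter~$2$ under cyclic reach but missing several adjacencies) and performing endpoint swaps that trade duplicate adjacencies for missing ones without raising the thickness; the resulting endpoint representation and the corresponding Figure~\ref{3-8counterexample} are the proof. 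To complete your argument you must either exhibit such a word and its sweep table, or give a non-constructive existence argument (which the tightness of the counting bound does not by itself provide, since the inequality is necessary but not sufficient).

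One minor point: your aside invoking Lemma~\ref{lem:3r+1} to get $\delta(10)\ge 3$ is a distraction here; the relevant bound is $\delta(8)\ge 3$, which you correctly derive later from the $n\le 3d+1$ argument for the first symbol. I would drop the $\delta(10)$ remark to avoid confusion.
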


\begin{figure}
\centering
\setlength{\unitlength}{11pt}
\begin{picture}(32,20)
\linethickness{2pt}

\mysetb{1}{19.8}{3}{15}{10.2}{7}{darkred}
\mysetb{2}{18.6}{4}{23}{5.4}{8}{darkblue}
\mysetb{3}{17.4}{5}{25}{4.2}{5}{darkyellow}
\mysetb{7}{15}{3}{19}{7.8}{5}{darkgreen}
\mysetb{5}{16.2}{7}{17}{9}{1}{orange}
\mysetb{11}{12.6}{3}{21}{6.6}{5}{violet}
\mysetb{9}{13.8}{7}{27}{3}{1}{teal}
\mysetb{13}{11.4}{7}{29}{1.8}{3}{brown}

\mylabel{2.25}{20}{18.25}{10.4}{A}
\mylabel{3.75}{18.8}{26.75}{5.6}{B}
\mylabel{5.25}{17.6}{27.25}{4.4}{C}
\mylabel{8.25}{15.2}{21.25}{8}{D}
\mylabel{8.25}{16.4}{17.25}{9.2}{E}
\mylabel{12.25}{12.8}{23.25}{6.8}{F}
\mylabel{12.25}{14}{27.25}{3.2}{G}
\mylabel{16.25}{11.6}{30.25}{2}{H}

\end{picture}
\caption{A society of size 8 with approval number 3.}
\label{3-8counterexample}
\end{figure}
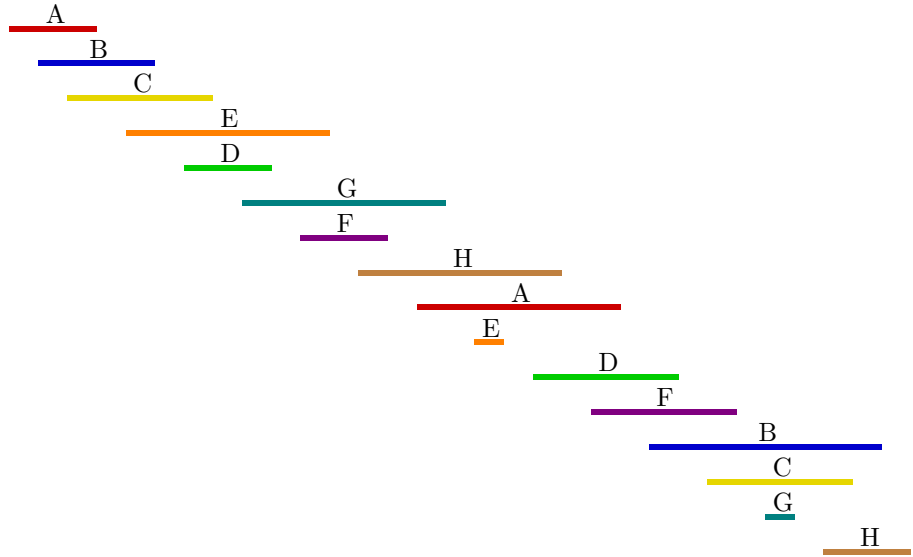

The society shown in Figure~\ref{3-8counterexample} provides such a society.  This example was  
 derived from the double-8 string
  \[ABCDEFGHEADFCGBH.\]
 If each interval in the string overlaps two intervals on each side, this arrangement 
 is missing the adjacencies $AG$, $BE$, $BF$, $CA$, $DG$, $DG$ and $CH$ and has duplicate adjacencies $BC$, $CD$, $DC$, $DE$, $DF$, $EG$, $FG$, and $GH$.  
By doing a series of moves that interchanges endpoints in such a way as to introduce missing adjacencies (at the expense of duplicate adjacencies) without increasing the approval number, we arrive at the society
\begin{eqnarray*}
&&+A+B+C-A+E-B+D-C+G-D+F-E+H-F+A-G\\
&&+E-E+D-H+F-A+B-D+C-F+G-G+H-C-B-H.
\end{eqnarray*}

We note that an example like this with $n=8$ and $a(S)=3$ cannot be achieved by a double-$n$ string since the first symbol in a  double-$n$ string with diameter $d$ is adjacent to at most $3d$ other symbols. Thus, as in Lemma \ref{lem:3r+1}, we have
 $n \leq 3d+1 = 3(a(S)-1) +1$, and so the approval number of a double-8 string must be at least 4.
 
 It is not clear how to systematically interchange endpoints to achieve all possible adjacencies. However, an algorithm which aims at making ``smart'' swaps produced societies with approval ratios given in Table \ref{boundresults}. A description of the algorithm can be found in \cite{Scott}.
The results of the hill-climbing algorithm in Figure~\ref{hillclimbingoutput} suggest that the asymptotic approval ratio should be $1/3$.

\begin{figure}  
\begin{verbatim}
a(S)=3, n=8,  AR=0.375: +A+B+F-F+G-A+F-B+C-C+D-G+E-D+H-F+G-G+A-E+D-H
                        +C-A+B-D+E-E+H-H-B-C   
a(S)=4, n=12, AR=0.333: +A+B+C+F-C+H-B+L-L+G-G+I-F+D-A+C-H+L-D+E-E+J
                        -I+G-J+K-C+B-L+I-I+D-K+E-G+J-B+F-D+K-F+A-A+H
                        -K-E-H-J
a(S)=5, n=15, AR=0.333: +A+B+C+D+E-C+G-A+O-G+F-D+K-F+J-J+N-E+C-B+A-O
                        +D-K+H-H+M-N+J-M+L-L+I-D+F-A+G-C+N-I+L-N+H-J
                        +M-F+K-G+I-K+B-B+E-E+O-H-M-I-L-O
a(S)=6, n=18, AR=0.333: +A+B+C+D+E+G-A+M-C+K-G+Q-Q+P-P+O-O+J-D+F-E+A
                        -B+C-F+P-K+I-I+R-M+O-R+H-H+L-J+Q-L+N-A+G-C+J
                        -J+F-P+I-O+R-N+H-Q+L-G+D-F+N-D+K-K+B-B+E-R+M
                        -N-M-E-H-L-I
a(S)=7, n=21, AR=0.333: +A+B+C+D+E+F+I-A+K-K+N-N+R-R+G-E+T-B+J-T+U-G
                        +Q-D+B-I+P-Q+M-C+H-F+L-J+S-U+O-B+D-D+C-C+I-I
                        +F-F+G-P+Q-M+R-L+J-S+N-O+K-H+A-G+E-J+T-Q+P-P
                        +M-M+U-R+S-S+O-U+L-N+H-T-H-K-L-E-O-A
a(S)=8, n=24, AR=0.333: +A+B+C+D+E+F+G+L-G+N-C+O-F+Q-O+M-N+T-B+H-H+K
                        -Q+I-I+U-U+X-D+J-L+F-A+O-E+C-J+G-M+P-T+U-P+H
                        -K+W-X+I-W+R-R+V-V+S-O+Q-C+J-F+N-G+B-U+X-S+P
                        -H+R-I+V-X+W-Q+K-B+D-J+T-N+S-D+L-T+M-K+A-L+E
                        -P-A-W-S-E-M-V-R
 \end{verbatim}
\caption{Output pairwise-intersection double-interval societies with given sizes and approval ratios found by a heuristic %hill-climbing 
algorithm. Here $AR$ denotes the approval ratio.}
\label{hillclimbingoutput}
\end{figure}

\bigskip

\section{Conclusion and Open Questions}

We have studied pairwise-intersecting double-interval societies, and determined bounds for the minimum guaranteed approval ratio for such societies.  Such questions naturally motivated the study of double-$n$ strings, which represent certain special double-interval societies with low approval ratios.  Although these do not necessarily provide the smallest such ratios, all of the known examples that provide smaller ratios come from modifying the double-$n$ string construction.

There are numerous open questions.  

\begin{itemize}
\item For double-$n$ strings, is there a systematic way to construct strings of the smallest diameter?
\item
Beyond double-$n$ strings, is there a better general construction that yields societies with the lowest approval ratios?  
\item
With double-$n$ strings, we currently have $\Delta$ bounded by $0.348 \leq \Delta \leq 0.385$. Can we tighten the bounds on $\Delta$?
\item
What results can be obtained for triple-interval societies?
\item
What about higher-dimensional approval sets?  What can be said if each voter's approval set consists of two convex sets in the plane?
\end{itemize}

Finally, we end with our initial conjecture, which now has more evidence as support. 
\begin{conjecture}
For all pairwise-intersecting double-interval societies $S$, the approval ratio $$\frac{a(S)}{n} \geq \frac13.$$
\end{conjecture}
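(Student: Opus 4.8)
\section*{Proof proposal for the Conjecture}

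The plan is to prove the stronger statement that the approval sets of any pairwise-intersecting double-interval society $S$ with $n$ voters can be pierced by three platforms $p_1,p_2,p_3$ --- that is, the transversal number $\tau(S)$ of the family $\{A_i\cup A_i'\}$ is at most $3$. Granting this, one of $p_1,p_2,p_3$ is approved by at least $\lceil n/3\rceil$ voters, so $a(S)\ge n/3$. Since the two intervals of each voter are disjoint, no platform lies in two intervals of the same voter, and hence $a(S)$ equals the largest number of the $2n$ intervals through a single point; the conjecture therefore reduces to a purely combinatorial--geometric claim about $2$-intervals (unions of two disjoint closed intervals on $\R$): \emph{a pairwise-intersecting family of $2$-intervals has transversal number at most $3$}. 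It is worth noting that the weaker fractional statement $\tau^*(S)\le 3$ already suffices: the assignment $x_i=1/a(S)$ is a feasible fractional matching (for every platform $p$ one has $\sum_{i:\,p\in A_i\cup A_i'}x_i = a(p)/a(S)\le 1$), so $\tau^*(S)=\nu^*(S)\ge n/a(S)$ by LP duality, i.e.\ $a(S)\ge n/\tau^*(S)\ge n/3$. A fractional-Helly-type argument for $2$-intervals might reach $\tau^*(S)\le 3$ more easily than $\tau(S)\le 3$.

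To attack $\tau(S)\le 3$ directly I would run a greedy sweep from left to right, using that each voter $i$ has $A_i$ entirely to the left of $A_i'$. Let $v$ be a voter whose interval with the smallest right endpoint is minimal among all $2n$ right endpoints, and take $p_1$ to be that right endpoint. Any voter $w$ not approving $p_1$ has all of its right endpoints $\ge p_1$, which forces both of its intervals into $(p_1,\infty)$; moreover, since $A_v$ lies in $(-\infty,p_1]$, pairwise-intersection with $v$ forces $A_v'$ to meet $A_w\cup A_w'$. One then recurses on the (still pairwise-intersecting) sub-society lying in $(p_1,\infty)$, choosing $p_2$ the same way; after two stages every un-pierced voter must meet \emph{both} of the ``long'' intervals $A_v'$ and the analogous interval of the stage-two pivot, while also lying in a common right ray and intersecting every other un-pierced voter. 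The goal is to show these constraints leave no room for a fourth stage.

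The main obstacle is exactly this: the society of Figure~\ref{3-8counterexample} and the modified double-$n$ string societies of Figure~\ref{hillclimbingoutput} attain $a(S)=n/3$, hence $\tau(S)=3$, so the bound $\tau(S)\le 3$ must be tight and no slack is available. A naive greedy choice of $p_2,p_3$ can fail --- a family of intervals all meeting a fixed interval is not pierced by that interval's two endpoints --- so one must track not merely which ray the un-pierced voters occupy but how the two intervals of each are forced to interleave with the pivot voters selected at earlier stages. Getting this bookkeeping to terminate after three points, and doing so without hidden geometric hypotheses, is where the real work lies.

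As a fallback, one can try to sharpen the counting argument behind Theorem~\ref{lowerbound}, whose bound $(4n-a(S))(a(S)-1)\ge n(n-1)$ loses ground in three identifiable places: the inequality $\sum_i[L(A_i)+\cdots+B(A_i')]\ge n(n-1)$ ignores voter pairs whose approval sets meet in more than one interval pair; the identity $\sum B=\sum C$ merely transfers information rather than bounding $\sum C$ geometrically; and the left-endpoint ordering estimate never uses that $A_i$ precedes $A_i'$ in that order. Feeding any one of these improvements through the same algebra should replace the coefficient $4n$ by roughly $3n$, i.e.\ produce an inequality of the form $(3n-O(a(S)))(a(S)-1)\ge n(n-1)$, which forces $a(S)\ge n/3$. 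The difficulty is that converting the qualitative ``slack'' into a quantitative gain appears to need precisely the interleaving structure that the transversal approach packages more transparently, so I would pursue the $\tau(S)\le 3$ route first and use the counting refinement as a source of partial progress (e.g.\ improving $0.268$) if the piercing argument stalls.
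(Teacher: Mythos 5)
The paper leaves this statement as a conjecture with no proof of its own, so there is nothing to compare against; I can only assess your plan on its merits. Your reduction is correct and sharp: since a voter's two intervals are disjoint, no platform lies in both, so $a(S)\ge\lceil n/\tau(S)\rceil$ where $\tau$ is the piercing number of $\{A_i\cup A_i'\}$, and your LP-duality remark that the fractional bound $\tau^*(S)\le 3$ would already suffice is also right. Moreover, the key claim you reduce to --- that a pairwise-intersecting family of $2$-intervals on a line has $\tau\le 3$ --- is in fact a known theorem: it is the $d=2$, $\nu=1$ case of Kaiser's bound $\tau\le(d^2-d+1)\nu$ for systems of $d$-intervals (T.~Kaiser, \emph{Transversals of $d$-intervals}, Discrete Comput.\ Geom.\ 18 (1997), 195--203), extending Tardos's $\tau\le 2\nu$ for separated $2$-intervals. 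So modulo that citation your plan actually settles the conjecture; the paper's authors appear not to have made this connection.

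The gap is in your attempt to establish $\tau\le 3$ by a greedy sweep. After the two pivots $p_1,p_2$, the surviving voters lie in $(p_2,\infty)$, are pairwise-intersecting, and each meets the fixed intervals $A_{v_1}'$ and $A_{v_2}'$; none of this yields a Helly conclusion, because the surviving family is still a genuine family of $2$-intervals, and $2$-intervals on $\R$ have no finite Helly number. You flag this difficulty yourself, and rightly so: Kaiser's proof is topological (a Borsuk--Ulam/KKM-type argument), and the bound $\tau\le 3\nu$ is tight already at $\nu=1$ --- the paper's own examples with $a(S)=n/3$ (Table~\ref{boundresults}, e.g.\ $n=12$, $a(S)=4$) witness $\tau=3$ --- so a sweep has no slack to exploit and a third greedy pivot need not finish the job. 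Unless you find a genuinely new elementary argument, the piercing route should be carried out by invoking Kaiser rather than re-deriving it; your fallback of sharpening the counting behind Theorem~\ref{lowerbound} appears, as you note, to require exactly the interleaving structure the sweep is missing, and so is unlikely to be the easier path.
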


\bibliographystyle{amsplain}

\end{document}